\newcommand{\gal}{\text{Gal}}
\tikzstyle{vertex}=[circle, draw=white, fill=black, inner sep=0pt, minimum size=4pt]
\tikzstyle{labelsty}=[font=\scriptsize]
\colorlet{ecol}{black!50!white}
\tikzstyle{edge}=[ecol,line width=1.5pt]
\newtheorem{lemma}{Lemma}[section]
\newtheorem{theorem}[lemma]{Theorem}
\newtheorem{remark}[lemma]{Remark}
\theoremstyle{definition}
\newcommand{\C}{\mathbb{C}}
\newcommand{\R}{\mathbb{R}}
\newcommand{\Z}{\mathbb{Z}}
\newcommand{\F}{\mathbb{F}}
\newcommand{\E}{\mathbb{E}}
\newcommand{\K}{\mathbb{K}}
\renewcommand*{\L}{\mathbb{L}}
\newcommand{\g}{\mathcal{G}}
\newcommand{\Gal}{\operatorname{Gal}}
\title{On Galois groups of type-1 minimally rigid graphs}
\author{Mehdi Makhul, Josef Schicho, Audie Warren}
\newcommand{\Addresses}{{
  \bigskip
  \footnotesize

M. Makhul, \textsc{Johann Radon Institute for Computational and Applied Mathematics (RICAM), Linz, Austria}\par\nopagebreak
  \textit{E-mail address}, \texttt{mehdi.makhul@oeaw.ac.at}

  \medskip

  J. Schicho, \textsc{Research Institute for Symbolic Computation (RISC), Linz, Austria}\par\nopagebreak
  \textit{E-mail address}, \texttt{Josef.Schicho@risc.jku.at}

  \medskip

   A. Warren, \textsc{Johann Radon Institute for Computational and Applied Mathematics (RICAM), Linz, Austria}\par\nopagebreak
  \textit{E-mail address}, \texttt{audie.warren@oeaw.ac.at}}
  }
\begin{document}

\maketitle

\begin{abstract}
  For every graph that is minimally rigid in the plane, its Galois group is defined as the Galois group generated by the coordinates of its planar realizations, assuming that the edge lengths are transcendental and algebraically independent. In this paper we compute the Galois group of all minimally rigid graphs that can be constructed from a single edge by repeated Henneberg 1-steps. It turns out that any such group is totally imprimitive, i.e., it is determined by all the partitions it preserves.   
\end{abstract}

\section{Introduction}

A graph is called rigid in the plane if, given generic lengths for its edges, the number of ways to position the vertices in $\mathbb C^2$ which respect these edge lengths is finite up to rotations and translations. A graph is called \textit{minimally rigid} if the removal of any edge causes the graph to become non-rigid. Minimally rigid graphs have been characterised by Geiringer\cite{Geiringer}, and independently by Laman~\cite{Laman}. It is known that all mimimally rigid graphs can be constructed from a single edge by two basic operations on graphs, called the Henneberg 1-step and Henneberg 2-step.

This paper is devoted to the problem of calculating Galois groups for minimally rigid graphs of type-$1$ - that is, minimally rigid graphs that can be constructed from a single edge by repeated applications of the Henneberg 1-step. Galois groups of minimally rigid graphs were studied previously in \cite{OP2007}, where the authors conjectured that if a minimally rigid graph is $3$-connected then its Galois group is not solvable. The minimally rigid graphs that can be constructed by repeated applications of the Henneberg 1-step fall into the subclass of those for which Owen \cite{Owen91} proved that the Galois group is solvable -- actually, it is proven that the Galois group is even a 2-group in this case. As a consequence, the realizations of such a graph for given edge lengths can be constructed by a ruler and compass (see \cite{GaoChou}). The main result of this paper is a complete description of the Galois group for type-$1$ graphs.
 
This question may be embedded into the wider class of problems of computing Galois groups for enumerative geometric problems, such as lines on cubic surfaces. 
For any geometric problem that has only finitely many solutions for a given instance of parameters such a Galois group can be defined. Computing the Galois groups for such geometric problems is an active area in algebraic geometry, with far-reaching results \cite{Esterov2019}, \cite{Harris1979}, \cite{HK2022} \cite{FY2021}. Known methods for computing these Galois groups include monodromy and the decomposition of the discriminant variety (see \cite{Esterov2019}). In this paper, we stay on a level that is more elementary: we just use the Galois correspondence, properties of polynomials, and known results on rigid graphs.

The first sequence in \cite{Sloane} gives the number of isomorphism classes of groups of order $n$, for each $n$. This sequence has high peaks at a power of $2$:
the number of isomorphism classes is particularly large if $n$ is a power of $2$. Therefore there are potentially many candidates for the Galois groups we
want to identify - recalling that it was already proven that the Galois group of a type-1 graph is a $2$-group. We observed a common feature in all these groups, which can be described as follows. The Galois groups come with a transitive action on the
set of realizations. The group is called imprimitive if the set on which it acts can be partitioned into blocks, such that each permutation induces a permutation on blocks. We call a group {\em totally imprimitive} if there is a set of partitions into blocks, such that the group consists of all 
permutations that take blocks into blocks, for each partition. Theorem~\ref{thm: main} says that the Galois group of any minimally rigid graph that can be
constructed by Henneberg 1-steps is totally imprimitive. Any block partition is related to a triple of vertices: two realizations are in the same block if and only if the signed area of triangle given by the triple of vertices is the same for both realizations.

The structure of the paper is as follows. In Section \ref{sec:prelim} we give the necessary definitions and preliminaries. In Section \ref{sec:proof} we prove our main result, and in Section~\ref{ssec:examples} we give an example of how our result can be used to calculate Galois groups.

\section{Preliminaries} \label{sec:prelim}

  Let $\g=(V,E)$ be a graph. A \emph{labelling} of $\g$ is a map $\lambda \colon E \rightarrow \C$. A \emph{realization} of $\g$ is a function $\rho \colon V \rightarrow \C^2$. We say that a realization of $\g$ is compatible with a labelling $\lambda$ if for  each  edge $e\in E$, the squared distance between its endpoints agrees with its label, that is,
\[
\lambda(e)=(x_u-x_v)^2 +(y_u-y_v)^2 \ \ \text{where } e=\left\{u, v \right\}, \rho(u) = (x_u,y_u), \rho(v)=(x_v,y_v).
\]
Labels of the graph therefore correspond to squared edge lengths in the realisation. We will often abuse notation and consider a labelling as a vector of complex numbers, and a realisation as a vector giving the vertex positions. Two realisations of a graph with a fixed labelling are considered equivalent if the points given are the same up to rotation and translation. 

There is a classification (see \cite{Geiringer}, \cite{Laman}) of minimally rigid graphs (also called Laman graphs), in purely combinatorial terms; a graph is minimally rigid iff $|E|= 2|V|-3$, and for every subgraph $\g'=(V',E')$ we have $|E'|\le 2|V'|-3$. For a given minimally rigid graph $\g$, each of its realizations corresponds to a point in $\C^{2|V|}$, and each of its labellings correspond to a point in $\C^{|E|}$. In order to remove the equivalence by rotations and translations, we fix two vertices $v_1,v_2$ that are connected by an edge, and we assume that $\lambda(v_1,v_2)=1$ and that $\rho(v_1)=(0,0)$, $\rho(v_2)=(1,0)$. for given edge lengths, we call the set of such normalised realisations $\mathcal F$.

There are two processes, called the Henneberg steps, which can generate all minimally rigid graphs from an edge.

\begin{theorem} \label{thm: Henneberg}
By the following two rules we can construct all minimally rigid graphs, starting from a single edge.
\begin{itemize}
	\item Henneberg 1-step: add a new vertex and connect it to two
	existing vertices.
	\item Henneberg 2-step: select three vertices of the graph, at
	least two of which are connected by an edge e; delete the edge e;
	add a new vertex and connect it to the three chosen vertices.
\end{itemize}
\end{theorem}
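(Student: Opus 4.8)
The plan is to prove the substantive direction of this classical statement, namely that \emph{every} minimally rigid graph $\g$ can be reduced to a single edge by inverting Henneberg steps; the converse, that each Henneberg step preserves the Laman conditions, is a short direct check which I would record separately. The argument is by induction on $n := |V|$, using only the combinatorial characterisation recalled above: $|E| = 2|V| - 3$, and $|E'| \le 2|V'| - 3$ for every subgraph $\g' = (V', E')$.

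For $n = 2$ the edge count forces $|E| = 1$, so $\g$ is a single edge and there is nothing to prove. Suppose $n \ge 3$. First, $\g$ has no vertex of degree $\le 1$: deleting one would leave a subgraph on $n - 1$ vertices with at least $2n - 4 = 2(n-1) - 2$ edges, contradicting the hereditary inequality. Since also $\sum_v \deg v = 2|E| = 4n - 6 < 4n$, some vertex $v$ has $\deg v \in \{2, 3\}$.

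If $\deg v = 2$, then $\g - v$ has $n - 1$ vertices and $2n - 5 = 2(n-1) - 3$ edges, and every subgraph of $\g - v$ is a subgraph of $\g$, so $\g - v$ is again minimally rigid; by induction it is constructible from a single edge, and reattaching $v$ to its two neighbours is a Henneberg $1$-step producing $\g$.

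The case $\deg v = 3$, with neighbours $a, b, c$, is the crux, and the step I expect to be the main obstacle. Now $\g - v$ has $2n - 6 = 2(n-1) - 4$ edges, one too few to be Laman, so the goal is to add back one of the pairs among $\{a,b\}, \{b,c\}, \{c,a\}$ that is not already an edge, in such a way that the Laman conditions are restored: then a Henneberg $2$-step deleting that edge and attaching a new vertex $v$ to $a, b, c$ recovers $\g$. Not all three pairs are edges of $\g$ (otherwise $\{a,b,c,v\}$ would span $6 > 2\cdot 4 - 3$ edges), so at least one is available. Call $W$ \emph{tight} in $\g - v$ if $|E(W)| = 2|W| - 3$; if adding an available pair created a violation, some tight $W$ containing that pair would exist. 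One would then split into cases according to how many of $\{a,b\}, \{b,c\}, \{c,a\}$ are edges of $\g$, and in each case combine the tight sets witnessing the (hypothetical) failures — using submodularity of the edge count, $|E(X \cup Y)| + |E(X \cap Y)| \ge |E(X)| + |E(Y)|$, and $|X \cup Y| + |X \cap Y| = |X| + |Y|$, so that two tight sets meeting in at least two vertices have a tight union — together with the edges incident to $a$, $b$, $c$, so as to exhibit a vertex set $U$ for which $U \cup \{v\}$ carries strictly more than $2|U \cup \{v\}| - 3$ edges in $\g$; this contradicts that $\g$ is Laman. Hence some available pair $e$ can be added, $\g - v + e$ is minimally rigid on $n - 1$ vertices and thus Henneberg-constructible by induction, and the Henneberg $2$-step deleting $e$ and joining $v$ to $a, b, c$ produces $\g$. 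The genuinely delicate part — the main obstacle — is organising this degree-$3$ case analysis so that a contradiction is forced in every configuration, including when the relevant tight sets overlap only in a single vertex; everything else is routine arithmetic with the Laman count.
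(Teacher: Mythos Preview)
The paper does not prove this theorem: Theorem~\ref{thm: Henneberg} is stated in the preliminaries as a classical, well-known result (the Henneberg moves together with the Geiringer--Laman count), with no proof offered and none intended. So there is no ``paper's own proof'' to compare against; your proposal is a sketch of the standard textbook argument rather than a competing approach.

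As a sketch of that standard argument, your outline is sound. The reduction to a vertex of degree $2$ or $3$ via the edge count, and the degree-$2$ inverse step, are correct. In the degree-$3$ case you have correctly identified both the right tool (tight sets and submodularity of the induced edge count) and the genuine obstacle (when the tight witnesses for the blocked pairs meet in only one vertex, the union is not automatically tight and one must bring $v$ back in to force a Laman violation). What remains is really just a careful case split: (i) exactly one of $ab$, $bc$, $ca$ is a non-edge, (ii) exactly two are, (iii) all three are; in each case one combines the tight sets with the edges at $v$ to produce a set $U$ with $|E_{\g}(U)| > 2|U| - 3$. This is routine but fiddly, and your proposal stops just short of carrying it out. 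If you want a complete write-up, the full argument appears in standard rigidity references (e.g.\ Graver--Servatius--Servatius, \emph{Combinatorial Rigidity}); for the purposes of this paper, however, the theorem is simply quoted.
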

A graph which can be constructed from a single edge by applications of only the Henneberg 1-step is called a \textit{type 1 graph}.
	
\subsection{The Galois group of a minimally rigid graph}
Given a minimally rigid graph $\g$, from hereon we shall always consider a fixed generic labelling $\lambda$ of $\g$. In particular we assume that 
$\lambda(v_1,v_2)=1$, and that the set of all other labels, given by 
$$\Lambda := \{\lambda(e) : e \in E\setminus\{v_1,v_2\}\},$$
generates a purely transcendental field extension of $\mathbb Q$, with $\Lambda$ algebraically independent over $\mathbb Q$; this is the meaning we adopt for a labelling to be called `generic'. We call this field extension $\mathbb K = \mathbb Q (\Lambda)$. For each $\rho \in \mathcal F$, $\rho(V)$ is the set of vertex coordinates given by $\rho$. Then we define the set
$$C:= \bigcup_{\rho \in \mathcal F}\rho(V),$$
which is the set of all vertex coordinates given by some realisation of $\g$ compatible with $\lambda$. We can now define the algebraic (in fact, Galois) field extension $\mathbb E:= \K(C)$. We then have the tower
$$\mathbb Q \subseteq \K \subseteq \E.$$
	The Galois group of $\g$ is then defined as 
 $$\Gal(\g) := \Gal(\E/\K).$$
 The Galois group of $\g$ does not depend on the generic choice of $\lambda$, nor on the choice of vertices embedded at $(0,0)$ and $(1,0)$. $\Gal(\g)$ acts on the set $\mathcal{F}$ in the following way: for a field automorphism $\sigma \in \gal(G)$ and a realisation $\rho \in \mathcal F$, we define
\begin{equation}\label{actiondefinition}
    \sigma(\rho) := ( \sigma(\rho(v_1)), \sigma(\rho(v_2)),...,\sigma(\rho(v_n))
\end{equation}
that is, $\sigma$ is applied to the coordinates of each vertex. Note that since we always fix the vertices $v_1 = (0,0)$, and $v_2 = (1,0)$, and these coordinates lie in the fixed field, they are invariant under the group action. In this sense, an element of the Galois group of a graph corresponds to a permutation of the realisations $\mathcal F$, and we will often abuse notation by regarding $\Gal(\g)$ as both a group of field automorphisms, and as a subgroup of the permutation group $S_{|\mathcal F|}$. 

\subsection{Signed area and the Cayley-Menger determinant}
In order to state our result, we will need the notion of the signed area of a triangle. Given a triple of points $(p_1,p_2,p_3)$ in the real plane, the signed area of $(p_1,p_2,p_3)$ is the area of the triangle given by the three points, multiplied by $+1$ if the vertices are listed in counter-clockwise order, and $-1$ if they are listed in clockwise order. We denote the signed area of $(p_1,p_2,p_3)$ by $a_{p_1p_2p_3}$. This is given as a polynomial relation by
$$a_{p_1p_2p_3} = \frac{1}{2}\left( (x_2-x_3)(y_1-y_3) - (x_1-x_3)(y_2 - y_3) \right)$$
where $p_i=(x_i,y_i)$.
This formula is taken as the definition of $a_{p_1p_2p_3}$ for complex points. Given three vertices $v_i$,$v_j$,$v_k$ of a minimally rigid graph $\g$, and a realisation $\rho$ of $\g$, we define 
$$a_{ijk}(\rho):=a_{\rho(v_i)\rho(v_j)\rho(v_k)}.$$
 
The square of the area $A$ of a triangle with edge lengths $l_1,l_2,l_3 \in \mathbb C$ can be calculated using the Cayley-Menger determinant, which gives the equation
$$-16A^2 = \begin{vmatrix}
0 & l_1^2 & l_2^2 & 1 \\
l_1^2 & 0 & l_3^2 & 1  \\
l_2^2 & l_3^2 & 0 & 1 \\
1 & 1 & 1 & 0
\end{vmatrix}.$$
 The squared area of a triangle therefore fulfills a degree two polynomial in the \emph{squared} edge lengths. 
 \subsection{Type-1 graphs}

 Suppose $\g'$ is a minimally rigid graph on $n-1$ vertices, and $\g$ is constructed from~$\g'$ by a single Henneberg 1-step, which adds an $n'$th vertex connected to two vertices of~$\g'$. It is clear that each realisation of $\g'$ splits into two realisations $\rho_1, \rho_2 \in \mathcal F$ of $\g$, depending on the choice of position for the last vertex; this vertex can always be reflected in the line connecting its two neighbours. We define an equivalence relation on realisations of $\g$ in the following way
$$\rho \sim \rho' \iff \rho|_{n-1} = \rho'|_{n-1}.$$
Where $\rho|_{n-1}$ denotes $\rho$ restricted to the $n-1$ vertices of $\g'$. Each equivalence class has size two, and the number of equivalence classes is equal to the number of realisations of the graph $\g'$, call the set of these realisations $\mathcal F'$. We define a subgroup $H \subseteq S_{|\mathcal F|}$ as follows:
\begin{equation}\label{eqn:H}
    H:= \{ h \in S_{|\mathcal F|} : \forall \rho, \rho' \in \mathcal F,\ \rho \sim \rho' \implies h(\rho) \sim h(\rho') \}.
\end{equation}
Note that we are viewing elements of $S_{|\mathcal F|}$ as acting on $\mathcal F = \{\rho_1,\rho_2,...,\rho_{|\mathcal F|}\}$ itself. We see that $H$ are those permutations of $\mathcal F$ which respect the equivalence relation $\sim$. We claim that $\gal(\g)$ is a subgroup of $H$. Indeed, suppose we take $\sigma \in \gal(\g)$, and two realisations $\rho,\rho' \in \mathcal F$ with $\rho \sim \rho'$. Since they are equivalent, their restriction to the first $n-1$ vertices coincides. Then by the definition of the group action \eqref{actiondefinition}, we have
\begin{align*}
    \sigma(\rho)|_{n-1} &= (\sigma(\rho(v_1)), \sigma(\rho(v_2)),...,\sigma(\rho(v_{n-1})) \\&= (\sigma(\rho'(v_1)), \sigma(\rho'(v_2)),...,\sigma(\rho'(v_{n-1})) = \sigma(\rho')|_{n-1}
\end{align*}
and therefore $\sigma(\rho)\sim \sigma(\rho')$, as needed. Furthermore, there is a group homomorphism $\psi :H \rightarrow S_{|\mathcal F'|}$ which maps each permutation $h \in H$ to the permutation in $S_{|\mathcal F'|}$ given by the action of $h$ on each equivalence class. Note that the equivalence classes correspond precisely to realisations of the smaller graph $\g'$, which we write as $\mathcal{F}'=\{c_1,c_2,...,c_{|\mathcal F'|}\}$, with $c_i = \{\rho_i,\rho_i'\}$, and define 
$$\psi(h)(c_i) := \{h(\rho_i),h(\rho_i')\} = c_j, \ \text{for some }j.$$
Note that if the permutation $h \in H$ is actually a Galois group element $\sigma \in \Gal(\g)$, then the definition of $\psi$ can be equivalently written as
$$\psi(\sigma)(v_1,v_2,...,v_{n-1}) = (\sigma(v_1),\sigma(v_2),...,\sigma(v_{n-1})).$$ 
We can now state our main result, firstly in its inductive (and more general) form.
\begin{theorem} \label{thm: main}
For each minimally rigid graph $\g$ which is constructed from a minimally rigid subgraph $\g'$ by a single Henneberg 1-step which creates a new vertex $v_n$ and connects it to vertices $v_i,v_j$, we have
\begin{align*}
    \Gal(\g) \cong \{ h \in H : \psi(h) \in \Gal(\g'), \forall \rho, \rho' \in \mathcal F, \ &a_{ijn}(\rho) = a_{ijn}(\rho')  \\ & \implies a_{ijn}(h\rho) = a_{ijn}(h\rho')\}.
\end{align*}
    
\end{theorem}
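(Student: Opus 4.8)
The plan is to prove that $\Gal(\g)$ coincides, as a subgroup of $S_{|\mathcal F|}$, with the group $H^{\ast}$ on the right-hand side; here $H^{\ast}$ is indeed a subgroup, being $\psi^{-1}(\Gal(\g'))$ intersected with the stabiliser of the partition of $\mathcal F$ into level sets of $\rho\mapsto a_{ijn}(\rho)$, and the action of $\Gal(\g)$ on $\mathcal F$ is faithful since $\E=\K(C)$. Throughout I will write $\K'=\mathbb{Q}(\Lambda')$ for the field generated by the labels of $\g'$, $C'=\bigcup_{\rho'\in\mathcal F'}\rho'(V')$, $\E'=\K'(C')$ (so $\Gal(\g')=\Gal(\E'/\K')$), $m_1=\lambda(v_i,v_n)$, $m_2=\lambda(v_j,v_n)\in\K$, and $L=\E'\cdot\K$; by genericity $m_1,m_2$ are transcendental over $\E'$, so $L=\E'(m_1,m_2)$.

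The inclusion $\Gal(\g)\subseteq H^{\ast}$ should be quick. We already know $\Gal(\g)\subseteq H$. Given $\sigma\in\Gal(\g)$, the subfield $\E'\subseteq\E$ is $\sigma$-stable, because $\sigma$ fixes $\K'$ and, lying in $H$, permutes the classes of $\sim$ and hence the coordinates of the realizations of $\g'$; thus $\sigma|_{\E'}\in\Gal(\E'/\K')=\Gal(\g')$, and this restriction is exactly $\psi(\sigma)$. Moreover $a_{ijn}$ is a polynomial in the vertex coordinates with rational coefficients, so $\sigma(a_{ijn}(\rho))=a_{ijn}(\sigma\rho)$ for all $\rho$, whence $a_{ijn}(\rho)=a_{ijn}(\rho')$ implies $a_{ijn}(\sigma\rho)=a_{ijn}(\sigma\rho')$; so $\sigma\in H^{\ast}$. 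The substantial direction is the reverse inclusion, which I would obtain from a dimension count.

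For the count I would first record the field-theoretic picture. For $c\in\mathcal F'$ let $\delta(c)\in\E'$ be the squared distance between $v_i$ and $v_j$ in the realization $c$, and set $\beta_c:=-\tfrac{1}{16}(\delta(c)^2-2(m_1+m_2)\delta(c)+(m_1-m_2)^2)\in L$. By the Cayley--Menger determinant, $\beta_c=a_{ijn}(\rho)^2$ for either realization $\rho$ lying over $c$; the two such realizations take the values $\pm\alpha_c$ for a square root $\alpha_c$ of $\beta_c$, and $\alpha_c\ne 0$ by genericity. Solving the two distance equations for the position of $v_n$ shows its coordinates in any realization lie in $L(\alpha_c)$ for the relevant class $c$, so $\E=L(\alpha_c:c\in\mathcal F')$, a finite multiquadratic (hence Galois) extension of $L$. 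Then $\psi$ restricts to a surjection $\Gal(\g)\twoheadrightarrow\Gal(\g')$ (any element of $\Gal(\E'/\K')$ extends to $L$ fixing $m_1,m_2$, and it permutes $\{\beta_c\}$ and hence extends to $\E$) with kernel $\Gal(\E/L)$, giving $|\Gal(\g)|=|\Gal(\E/L)|\cdot|\Gal(\g')|=2^{k}|\Gal(\g')|$, where $k=\dim_{\F_2}\langle\beta_c:c\in\mathcal F'\rangle$ in $L^{\times}/(L^{\times})^2$. On the other hand, $\Gal(\g)\subseteq H^{\ast}$ gives $\psi(H^{\ast})=\Gal(\g')$, and the kernel of $\psi|_{H^{\ast}}$ consists of the class-flips $f_S$ ($S\subseteq\mathcal F'$) preserving the $a_{ijn}$-partition; since $a_{ijn}$-values of realizations over distinct classes $c,c'$ coincide exactly when $\beta_c=\beta_{c'}$, this kernel has order $2^{d}$ with $d$ the number of distinct values among the $\beta_c$. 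Hence $|H^{\ast}|=2^{d}|\Gal(\g')|$, and the whole theorem reduces to the equality $d=k$, i.e.\ to showing that the distinct values among the $\beta_c$ are $\F_2$-independent modulo squares in $L$.

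This is the step I expect to be the main obstacle. First, $\beta_{c_1}=\beta_{c_2}$ forces $\delta(c_1)=\delta(c_2)$ (the alternative $\delta(c_1)+\delta(c_2)=2(m_1+m_2)$ is impossible since $m_1,m_2$ are transcendental over $\E'$), so the distinct $\beta$-values correspond to distinct values $\delta^{(1)},\dots,\delta^{(d)}\in\E'$, each nonzero because no realization of $\g'$ has $v_i=v_j$ (a proper closed condition on realizations as the labels vary; immediate when $v_iv_j\in E(\g')$). Next, viewed over $\E'(m_1)$, each $\beta^{(\ell)}$ equals $-\tfrac{1}{16}$ times the monic quadratic $m_2^{2}-2(m_1+\delta^{(\ell)})m_2+(m_1-\delta^{(\ell)})^{2}$ in the variable $m_2$, whose discriminant $16\,m_1\delta^{(\ell)}$ is a non-square in $\E'(m_1)$ (as $m_1$ is transcendental over $\E'$ and $\delta^{(\ell)}\ne 0$); so this quadratic is irreducible in the UFD $\E'(m_1)[m_2]$, and distinct $\delta^{(\ell)}$ give pairwise non-associate irreducibles. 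Unique factorisation then shows that no nonempty product $\prod_{\ell\in T}\beta^{(\ell)}$ is a square in $\E'(m_1,m_2)=L$, which is precisely the needed independence; hence $|H^{\ast}|=|\Gal(\g)|$, and with the inclusion $\Gal(\g)\subseteq H^{\ast}$ this gives $\Gal(\g)=H^{\ast}$. The one point requiring care is the use of genericity: I need $m_1,m_2$ transcendental over $\E'$ (which follows from algebraic independence of the labels) and $\delta(c)\ne 0$ for every $c$, and the latter should be argued explicitly.
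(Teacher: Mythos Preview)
Your proposal is correct and follows essentially the same architecture as the paper: the inclusion $\Gal(\g)\subseteq H^{\ast}$, the surjectivity $\psi(\Gal(\g))=\Gal(\g')$, and the matching of kernel sizes $|\ker\psi|_{\Gal(\g)}|=|\ker\psi|_{H^{\ast}}|=2^{k}$ with $k$ the number of distinct squared distances $\delta(c)$. The only differences are packaging: where the paper proves an ad~hoc multiquadratic lemma (its Lemmas~3.3--3.4) you invoke the standard Kummer-theoretic identification $[\,L(\sqrt{\beta_c})\!:\!L\,]=2^{\dim_{\F_2}\langle\beta_c\rangle}$, and where the paper argues irreducibility of the Cayley--Menger polynomial in the two-variable ring $\E'[X,Y]$ by an elementary factorisation, you treat it as a quadratic in $m_2$ over $\E'(m_1)$ and check the discriminant $16\,m_1\delta^{(\ell)}$ is a non-square---both yield the same UFD conclusion that no nonempty product of distinct $\beta^{(\ell)}$ is a square in $L$. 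Your explicit flagging of the hypothesis $\delta(c)\neq 0$ is apt; the paper's irreducibility argument also tacitly relies on it.
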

Alternatively, if the graph $\g$ is itself a type-1 graph, our main result can be stated in a direct form. Since $\g$ is a type-1 graph, there exists a sequence $m_1,m_2,...,m_{n-2}$ of type-1 moves which construct $\g$ from a single edge. Each move $m_l$ consists of an ordered triple of vertices of $\g$, that is, $m_l = (i_l,j_l,n_l)$, with $n_l$ the newly constructed vertex and $i_l,j_l$ the two previously constructed vertices which are then connected to $n_l$. With this notation, Theorem \ref{thm: main} implies the following.
\begin{theorem} \label{thm: first version}
Let $\g$ be a minimally rigid type-1 graph with $n$ vertices and construction steps $(m_l)_{l=1}^{n-2}$ as defined above. Then we have
\begin{align*}
\Gal(\g) \cong\{ g \in S_{|\mathcal{F}|} : \forall l=1,\dots, n-2, \forall \rho, \rho' \in \mathcal F, \ &a_{i_lj_ln_l}(\rho) = a_{i_lj_ln_l}(\rho')  \\ & \implies a_{i_lj_ln_l}(g \rho) = a_{i_lj_ln_l}(g \rho')\}.
\end{align*}
\end{theorem}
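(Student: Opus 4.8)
The plan is to deduce Theorem~\ref{thm: first version} from Theorem~\ref{thm: main} by induction on the number of vertices $n$ of $\g$, i.e.\ on the length $n-2$ of the construction sequence $(m_l)_{l=1}^{n-2}$, identifying throughout each Galois group with its faithful action on the corresponding set of realizations (so that ``$\cong$'' becomes an equality of subgroups of the relevant symmetric group). The base case $n=2$ is immediate: then $\g$ is a single edge, $\mathcal F$ is a single point, and both sides are the trivial group. For the inductive step, let $\g'$ be the minimally rigid type-1 graph obtained by undoing the last move $m_{n-2}$, which creates $v_n$ and joins it to the older vertices $v_{i_{n-2}},v_{j_{n-2}}$; let $\mathcal F'$, $\sim$, $H$, $\psi$ be as in the excerpt for this last step, and note that the inductive hypothesis is precisely Theorem~\ref{thm: first version} applied to $\g'$, whose construction sequence is $(m_l)_{l=1}^{n-3}$. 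Theorem~\ref{thm: main} then identifies $\Gal(\g)$ with the set of $h\in H$ such that $\psi(h)\in\Gal(\g')$ and $h$ preserves the partition of $\mathcal F$ by the value of $a_{i_{n-2}j_{n-2}n}$; the work is to rewrite the condition ``$\psi(h)\in\Gal(\g')$'' purely in terms of $h$, and then to see that the accumulated conditions already force $h\in H$.

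The one genuinely non-formal ingredient, which I would prove first, is a rigidity property of type-1 realizations: for all $\rho,\rho'\in\mathcal F$ and every $k$ with $2\le k\le n$, one has $\rho|_k=\rho'|_k$ if and only if $a_{i_lj_ln_l}(\rho)=a_{i_lj_ln_l}(\rho')$ for every $l$ with $n_l\le k$. This is an induction on $k$: given that $v_1,\dots,v_k$ already sit at prescribed common positions, the next vertex $v_{k+1}$ lies on the intersection of the two circles around its (already placed) neighbours with radii prescribed by the generic labels; by genericity these two neighbours occupy distinct points, so there are at most two candidate positions for $v_{k+1}$, they are reflections of one another in the line through the neighbours, and hence are separated by the value of the signed area of the triangle recorded by the move that creates $v_{k+1}$. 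Taking $k=n$ gives the structural fact underlying the theorem: a realization of a type-1 graph is pinned down by the tuple of signed areas of its construction triangles. The delicate point here is to state exactly which genericity hypothesis is in force -- namely that in every realization the two older endpoints of every Henneberg 1-step lie at distinct points, so that the ``two circles'' dichotomy is valid.

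Granting this lemma, the inductive step is bookkeeping. First, for each $l\le n-3$ the quantity $a_{i_lj_ln_l}$ involves only vertices among $v_1,\dots,v_{n-1}$, so it factors through the projection $\rho\mapsto\rho|_{n-1}$ from $\mathcal F$ onto $\mathcal F'$; combined with the fact that $h\rho$ lies in the $\sim$-class $\psi(h)(c)$ whenever $\rho$ lies in the $\sim$-class $c$, a representative-chasing argument (in both directions) shows that for $h\in H$ the permutation $h$ preserves the $a_{i_lj_ln_l}$-partition of $\mathcal F$ if and only if $\psi(h)$ preserves the $a_{i_lj_ln_l}$-partition of $\mathcal F'$. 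Feeding in the inductive hypothesis, ``$\psi(h)\in\Gal(\g')$'' becomes ``$h$ preserves the $a_{i_lj_ln_l}$-partition of $\mathcal F$ for every $l\le n-3$'', so Theorem~\ref{thm: main} now presents $\Gal(\g)$ as the set of $h\in H$ preserving all $n-2$ of these partitions. Second, this ``$h\in H$'' is redundant: if $g\in S_{|\mathcal F|}$ preserves all $n-2$ partitions and $\rho\sim\rho'$, then $\rho|_{n-1}=\rho'|_{n-1}$, so by the lemma $a_{i_lj_ln_l}(\rho)=a_{i_lj_ln_l}(\rho')$ for all $l\le n-3$; applying the hypothesis on $g$ one index at a time gives $a_{i_lj_ln_l}(g\rho)=a_{i_lj_ln_l}(g\rho')$ for all $l\le n-3$; and the lemma, read the other way, gives $g\rho|_{n-1}=g\rho'|_{n-1}$, i.e.\ $g\rho\sim g\rho'$, so $g\in H$. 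This yields exactly the description in Theorem~\ref{thm: first version}, completing the induction.

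The main obstacle I anticipate is the rigidity lemma and the genericity bookkeeping it rests on: one has to be sure that the signed areas of the construction triangles really do form a complete set of coordinates on $\mathcal F$ and on every truncation to $v_1,\dots,v_k$, with no degenerate configuration escaping the argument, and that each ``if and only if'' above -- in particular the equivalence between the condition on $h$ and the condition on $\psi(h)$ -- holds in both directions. Once these are nailed down, the reduction of Theorem~\ref{thm: first version} to Theorem~\ref{thm: main} is routine.
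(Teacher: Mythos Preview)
Your proposal is correct and follows essentially the same approach as the paper: both argue by induction on $n$, use the key fact that a type-1 realization is determined by the tuple of signed areas of its construction triangles (your ``rigidity lemma'', stated informally in the paper), and use this to show that membership in $H$ and the condition $\psi(h)\in\Gal(\g')$ are absorbed into the signed-area partition conditions. The only organizational difference is that the paper handles the inclusion $\Gal(\g)\subseteq G_2$ by the one-line observation that Galois automorphisms preserve polynomial relations over $\K$, whereas you obtain both inclusions simultaneously via the bidirectional equivalence between the conditions on $h$ and on $\psi(h)$; both routes are valid.
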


\textbf{Proof that Theorem \ref{thm: main} implies Theorem \ref{thm: first version}}

Let us denote by $G_1$ and $G_2$ the corresponding groups given in Theorem \ref{thm: main} and \ref{thm: first version} respectively, for some type-1 graph $\g$ on $n$ vertices. 

We begin by showing that $G_2 \subseteq G_1$, which is done by induction - we assume that for graphs with up to $n-1$ vertices, this inclusion is satisfied. For $g \in G_2$, we wish to show that $g \in H$, and furthermore that $\psi(g) \in \g'= \g|_{n-1}$ (Note that the third condition required is trivially satisfied by all elements of $G_2$). Take $\rho, \rho' \in \mathcal F$ such that $\rho \sim \rho'$, that is, $\rho|_{n-1} = \rho'|_{n-1}$. Since $\g$ is a type-1 graph, each realisation $\rho$ of $\g$ corresponds to a choice of sign for the signed area of the triangle $(i_l,j_l,n_l)$ constructed at each stage - informally speaking, this is a choice of whether to flip the triangle up or down. Since we have $\rho|_{n-1} = \rho'|_{n-1}$, each choice up to the $n-1$'th vertex must have been the same, that is, we have $a_{i_lj_ln_l}(\rho) = a_{i_lj_ln_l}(\rho')$ for each $l=1,...,n-3$. Since $g \in G_2$, each of these equalities also holds for $g\rho$ and $g \rho'$ - meaning that the same sign choices have been made in the construction of these two realisations, again for $l=1,...,n-3$. But then we have $(g\rho)|_{n-1} = (g\rho')|_{n-1}$, and therefore $g\rho \sim g\rho'$, proving that $g \in H$.

We now wish to prove that $\psi(g) \in \Gal(\g')$. Note that since we have already proved $g \in H$, $\psi$ can indeed be applied to $g$. It is in this step that we use induction; let $G_2'$ be the corresponding group for the smaller graph $\g'$, which by induction is contained in $\Gal(\g')$. It is therefore enough to prove that $\psi(g) \in G_2'$, meaning that we need to prove for each pair $\mu,\mu' \in \mathcal F'$, and $1\leq l \leq n-3$, we have $a_{i_lj_ln_l}(\mu) = a_{i_lj_ln_l}(\mu')  \implies a_{i_lj_ln_l}(\psi(g) \mu) = a_{i_lj_ln_l}(\psi(g) \mu')$.

Since $\mu \in \mathcal F'$, there exists some $\rho \in \mathcal F$ such that $\rho|_{n-1} = \mu$. Similarly, there is some $\rho' \in \mathcal F$ such that $\rho'|_{n-1} = \mu'$. In fact there are two such choices - it does not matter which we pick. We now have that $\psi(g)(\mu) = (g\rho)|_{n-1}$, and $\psi(g)(\mu') = (g\rho')|_{n-1}$. We then have, for each $1 \leq l \leq n-3$, the chain of implications
\begin{align*}
    a_{i_lj_ln_l}(\mu) = a_{i_lj_ln_l}(\mu') & \implies a_{i_lj_ln_l}(\rho) = a_{i_lj_ln_l}(\rho') \\
    & \implies a_{i_lj_ln_l}(g\rho) = a_{i_lj_ln_l}(g\rho') \\
    & \implies a_{i_lj_ln_l}((g\rho)|_{n-1}) = a_{i_lj_ln_l}((g\rho')|_{n-1}) \\
    & \implies a_{i_lj_ln_l}(\psi(g)(\mu)) = a_{i_lj_ln_l}(\psi(g)\mu'),
\end{align*}
as needed. This completes the proof that $G_2 \subseteq G_1$.

To show that $ \Gal(\g) \cong G_1 \subseteq G_2 $, we simply note that the conditions defining $G_2$ are given by polynomial relations which are defined over the base field $\mathbb K$, with vertex coordinates as variables. Since the Galois group $\Gal(\g)$ must preserve polynomial relations (meaning that if $f$ is a polynomial defined over $\mathbb K$ in variables from $\mathbb E$, then the image of any zero of $f$ under an element of the Galois group of $\mathbb E / \mathbb K$ must also be a zero of $f$), we see that $G_1 \subseteq G_2$.

\section{Proof of Theorem \ref{thm: main}}\label{sec:proof}
In this section we prove Theorem \ref{thm: main}. We retain all of the notation from above.
\begin{proof}
We begin by defining the set 
\begin{align*}
    J := \{ h \in H : \psi(h) \in \Gal(\g'), \forall \rho, \rho' \in \mathcal F, \ &a_{ijn}(\rho) = a_{ijn}(\rho')  \\ & \implies a_{ijn}(h\rho) = a_{ijn}(h\rho')\}.
\end{align*}
Our aim is to show that $\Gal(\g) = J$. Note that $J$ is a subgroup of $H$. 
\begin{lemma} \label{claim:subgroup}
We have $\Gal(\g) \subseteq J$.
\end{lemma}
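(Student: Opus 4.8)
Proof proposal for Lemma \ref{claim:subgroup} ($\Gal(\g) \subseteq J$)

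The plan is to take an arbitrary $\sigma \in \Gal(\g)$ and verify the three conditions that define membership in $J$: that $\sigma \in H$, that $\psi(\sigma) \in \Gal(\g')$, and that $\sigma$ preserves the equivalence relation induced by the signed area $a_{ijn}$.

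First, the fact that $\sigma \in H$ has already been established in the preliminaries: the computation there using the definition of the group action \eqref{actiondefinition} shows that $\rho \sim \rho'$ implies $\sigma(\rho) \sim \sigma(\rho')$, since the restriction of $\sigma$ to the coordinates of the first $n-1$ vertices commutes with restriction to those vertices. So the first condition is free.

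Next, for $\psi(\sigma) \in \Gal(\g')$, the natural approach is to exhibit $\psi(\sigma)$ as (the permutation action of) an element of $\Gal(\E'/\K)$, where $\E'$ is the field generated over $\K$ by the vertex coordinates of realizations of $\g'$. Since the coordinates of $v_1,\dots,v_{n-1}$ all lie in $\E$, we have $\E' \subseteq \E$, and moreover $\E'/\K$ is Galois (it is the splitting-field-type extension attached to $\g'$, noting that the generic labelling of $\g$ restricts to a generic labelling of $\g'$). Therefore restriction $\sigma \mapsto \sigma|_{\E'}$ gives a homomorphism $\Gal(\E/\K) \to \Gal(\E'/\K)$, and one checks directly from the definitions — using the identity $\psi(\sigma)(v_1,\dots,v_{n-1}) = (\sigma(v_1),\dots,\sigma(v_{n-1}))$ recorded just before Theorem \ref{thm: main} — that the permutation of $\mathcal F'$ induced by $\sigma|_{\E'}$ is exactly $\psi(\sigma)$. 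Hence $\psi(\sigma) \in \Gal(\g')$.

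Finally, for the signed-area condition: $a_{ijn}$ is a polynomial in the vertex coordinates with coefficients in $\mathbb Q \subseteq \K$, so the difference $a_{ijn}(\rho) - a_{ijn}(\rho')$ is a polynomial expression in elements of $\E$ over $\K$. If this difference vanishes, then applying $\sigma$ (which fixes $\K$ and is a field automorphism) shows $a_{ijn}(\sigma\rho) - a_{ijn}(\sigma\rho') = \sigma\big(a_{ijn}(\rho) - a_{ijn}(\rho')\big) = 0$, which is precisely the required implication. This is the same ``Galois groups preserve polynomial relations'' principle already invoked at the end of the previous section. Combining the three verifications gives $\sigma \in J$, and since $\sigma$ was arbitrary, $\Gal(\g) \subseteq J$.

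The only genuinely delicate point is the middle step: one must be careful that $\E'$ really is a subfield of $\E$ stable under every $\sigma \in \Gal(\E/\K)$ (which follows from $\E'/\K$ being Galois, itself a consequence of $\g'$ being minimally rigid with a generic labelling inherited from that of $\g$), and that the combinatorial map $\psi$ agrees with field-theoretic restriction. Once the dictionary between ``$\psi$ acting on equivalence classes'' and ``$\sigma|_{\E'}$ acting on $\mathcal F'$'' is pinned down via the displayed formula for $\psi(\sigma)$, the rest is routine.
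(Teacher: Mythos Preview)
Your proof is correct and follows essentially the same route as the paper: both argue that $\sigma\in H$ was already shown in the preliminaries, that $\psi(\sigma)\in\Gal(\g')$ because $\psi$ agrees with restriction of $\sigma$ to the intermediate field generated by the coordinates of $\g'$, and that the signed-area condition holds because Galois automorphisms preserve polynomial relations over $\K$. Your write-up is somewhat more explicit about why the intermediate extension is Galois and why the combinatorial $\psi$ matches field-theoretic restriction, but the underlying argument is the same.
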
 
\begin{proof}For $\sigma \in \Gal(\g)$, we have already seen that $\Gal(\g) \subseteq H$, so that $\sigma \in H$. Secondly, we want to show that $\psi(\sigma) \in \Gal(\g')$. For this we note that $\psi(\sigma)$ is a permutation of the realisations $\mathcal{F}'$ of the smaller graph, and as we have seen above, it is given by applying the field automorphism $\sigma$ to each vertex coordinate. This permutation then corresponds to the field automorphism $\sigma$ restricted to the intermediate field $\K \subseteq \F \subseteq \E$ containing the vertex entries for only the smaller graph $\g'$, and is therefore an element of $\Gal(\g')$. Lastly, each $\sigma \in \g$ fulfils the condition of preserving the relation of equal signed area. Indeed, signed area can be given by a polynomial equation, and elements of the Galois group preserve polynomial relations.
\end{proof}
In the next step we prove that $|\Gal(\g)| = |J|$. We do this by making use of the map $\psi:H \rightarrow S_{|\mathcal F'|}$. Indeed, we prove the following.
\begin{lemma} We have the following two equalities.
\begin{itemize}\label{claim:psi}
\item $\psi(\Gal(\g)) = \psi(J)$.
    \item $|\Gal(\g) \cap \ker(\psi)| =|J \cap \ker(\psi)|$.
\end{itemize}
\end{lemma}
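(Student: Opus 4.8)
The plan is to use these two equalities to promote the inclusion $\Gal(\g)\subseteq J$ of Lemma~\ref{claim:subgroup} to an equality: applying the first isomorphism theorem to $\psi$ restricted to $\Gal(\g)$ and to $J$ yields $|\Gal(\g)| = |\psi(\Gal(\g))|\cdot|\Gal(\g)\cap\ker(\psi)|$ and $|J| = |\psi(J)|\cdot|J\cap\ker(\psi)|$, so the two bullets force $|\Gal(\g)|=|J|$ and hence $\Gal(\g)=J$, which is Theorem~\ref{thm: main}. For the first bullet, $\psi(\Gal(\g))\subseteq\psi(J)$ is immediate, and $\psi(J)\subseteq\Gal(\g')$ by the definition of $J$, so it suffices to realise every $\tau\in\Gal(\g')$ as $\psi(\sigma)$ for some $\sigma\in\Gal(\g)$. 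Let $\F$ with $\K\subseteq\F\subseteq\E$ be the field generated over $\K$ by the coordinates of the realisations of $\g'$, and let $\lambda_i=\lambda(v_iv_n)$, $\lambda_j=\lambda(v_jv_n)$ be the two new edge labels. By genericity $\lambda_i,\lambda_j$ are algebraically independent over the subfield $\F_0$ generated over $\mathbb Q$ by those coordinates and the labels of $\g'$, so $\F=\F_0(\lambda_i,\lambda_j)$, and $\tau$ extends to an automorphism of $\F$ over $\K$ (let it fix $\lambda_i,\lambda_j$). Since $\E/\K$ is Galois, hence normal, this automorphism, viewed as a $\K$-embedding $\F\hookrightarrow\E$, extends to some $\sigma\in\Gal(\E/\K)=\Gal(\g)$; as $\psi(\sigma)$ is the permutation of $\mathcal F'$ induced by the action of $\sigma$ on the $\g'$-coordinates, which is that of $\tau$, we conclude $\psi(\sigma)=\tau$.

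For the second bullet I identify both sides as subgroups of $\ker(\psi)\cong(\Z/2\Z)^{\mathcal F'}$, where the nontrivial element over a class $c_i$ transposes the two realisations in $c_i$. First, $\sigma\in\Gal(\g)$ lies in $\ker(\psi)$ exactly when it fixes each class $c_i$ setwise, equivalently when it fixes every coordinate of every realisation of $\g'$, so $\Gal(\g)\cap\ker(\psi)=\Gal(\E/\F)$. From a realisation of $\g'$ the position of $v_n$ is pinned down by the single number $a_{ijn}$, whose square $P_\mu:=a_{ijn}(\rho)^2$ (for $\rho$ over $\mu\in\mathcal F'$) is, by the Cayley--Menger determinant, a polynomial in the squared edge lengths, so $P_\mu\in\F$, and $P_\mu\ne 0$ by genericity; hence $\E=\F\!\left(\sqrt{P_\mu}\ :\ \mu\in\mathcal F'\right)$ is multiquadratic over $\F$ and $|\Gal(\g)\cap\ker(\psi)|=2^{r}$, where $r$ is the $\mathbb F_2$-dimension of the subgroup of $\F^{\ast}/(\F^{\ast})^{2}$ spanned by the classes of the $P_\mu$. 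Second, $h\in\ker(\psi)$ lies in $J$ iff it preserves the relation ``equal $a_{ijn}$'', i.e.\ permutes the fibres of $a_{ijn}\colon\mathcal F\to\E$; since the two realisations over each $\mu$ have opposite and nonzero signed areas, the fibres link precisely those classes with a common value of $P_\mu$, and a short case check shows $h$ permutes the fibres exactly when its sign pattern is constant on each set $\{\mu\in\mathcal F':P_\mu=P\}$. Thus $|J\cap\ker(\psi)|=2^{r'}$, with $r'$ the number of distinct values of $P_\mu$; as $\Gal(\g)\subseteq J$ already gives $r\le r'$, the bullet reduces to the equality $r=r'$, i.e.\ to the claim that the distinct values of $P_\mu$ are independent modulo squares in $\F$.

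This last claim is the heart of the matter, and I expect it to be the main obstacle; it is where the genericity of $\lambda$ is decisive. Let $d^{(1)},\dots,d^{(s)}$ be the distinct values of $\lvert v_iv_j\rvert^{2}$ over the realisations of $\g'$; these lie in $\F_0$, which is algebraic over the field generated over $\mathbb Q$ by the labels of $\g'$, so by genericity $\lambda_i,\lambda_j$ are algebraically independent over $\F_0$ and $\F_0[\lambda_i,\lambda_j]$ is a unique factorization domain with fraction field $\F$. The Cayley--Menger determinant gives
\[
16\,P_\mu \;=\; -(\lambda_i-\lambda_j)^{2} + 2\,d_\mu(\lambda_i+\lambda_j) - d_\mu^{2},
\]
so, writing $16\,P^{(k)}$ for this expression with $d_\mu$ replaced by $d^{(k)}$, after the invertible substitution $(\lambda_i,\lambda_j)\mapsto(\lambda_i-\lambda_j,\ \lambda_i+\lambda_j)$ each $16\,P^{(k)}$ becomes a polynomial of degree one in the second variable with constant leading coefficient $2d^{(k)}$. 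By genericity $\lvert v_iv_j\rvert^{2}\ne 0$ in every realisation of $\g'$, so $2d^{(k)}$ is a unit of $\F_0[\lambda_i,\lambda_j]$ and each $16\,P^{(k)}$ is irreducible there; distinct $d^{(k)}$ give pairwise non-associate irreducibles, so in particular the $P^{(k)}$ are pairwise distinct (whence $r'=s$) and every nonempty product $\prod_{k\in S}16\,P^{(k)}$ is squarefree in $\F_0[\lambda_i,\lambda_j]$, hence not a square in $\F$. This gives $r=s=r'$ and completes the argument. The genericity inputs to be verified are the nonvanishing of $\lvert v_iv_j\rvert^{2}$ in each realisation of $\g'$ and the algebraic independence of $\lambda_i,\lambda_j$ over $\F_0$; both follow from the hypothesis on $\lambda$ together with the fact that the coordinate field of $\g'$ is algebraic over the field generated by its labels.
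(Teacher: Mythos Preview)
Your proof is correct and follows essentially the same architecture as the paper's: for the first bullet you close the chain $\psi(\Gal(\g))\subseteq\psi(J)\subseteq\Gal(\g')$ by extending automorphisms through the tower $\K\subseteq\F\subseteq\E$, and for the second bullet you show both intersections have order $2^k$ where $k$ counts the distinct squared distances $d^{(k)}=\lambda_{i,j}$, reducing to the irreducibility of the Cayley--Menger quadratics in $\F_0[\lambda_i,\lambda_j]$ via the change of variables $(\lambda_i-\lambda_j,\lambda_i+\lambda_j)$. The only substantive differences are stylistic: you invoke Kummer theory (the $\mathbb F_2$-dimension in $\F^\ast/(\F^\ast)^2$) where the paper proves two elementary lemmas (Lemmas~\ref{lm: not-square1} and~\ref{lem:aux}) to the same effect, and your irreducibility argument (linear in one variable after the substitution) is slightly slicker than the paper's explicit factorisation attempt; you also make explicit the genericity input $d^{(k)}\neq 0$, which the paper uses without comment.
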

Given Lemma \ref{claim:psi}, an application of the first isomorphism theorem to the following two group homomorphisms given by restrictions of $\psi$,
$$\psi_{J}:J \rightarrow \psi(J), \quad \psi_{\Gal(\g)}:\Gal(\g) \rightarrow \psi(\Gal(\g)),$$
yields $|J| = |\Gal(\g)|$, which together with Lemma \ref{claim:subgroup} finishes the proof.

\subsection{Proof that $\psi(\Gal(\g)) = \psi(J)$} \label{sec:chain}

We now prove the first assertion in Lemma \ref{claim:psi}. Note that we have
$$\psi(\Gal(\g)) \subseteq \psi(J) \subseteq \psi(\{h \in H : \psi(h) \in \Gal(\g')\}) \subseteq \Gal(\g').$$
We show that $ \Gal(\g') \subseteq \psi(\Gal(\g))$, closing the chain of inclusions above. In order to do this, we use the fact that for any tower of Galois extensions
$$\K \subseteq \F \subseteq \E,$$
a field automorphism in $\Gal(\F/\K)$ can be extended to a field automorphism in $\Gal(\E/ \K)$. Suppose there exists a field automorphism $\sigma' \in \Gal(\g')$ such that $\nexists \sigma \in \Gal(\g)$ such that $\psi(\sigma) = \sigma'$. However, as argued above, the map $\psi$ corresponds to a restriction of the field automorphism $\sigma \in \Gal(\g)$ to the intermediate field $\F$ given by adjoining to $\K$ all vertex coordinates of all realisations of $\g'$. We then have a tower of Galois extensions $\K \subseteq \F \subseteq \E$, and $\sigma' \in \Gal(\F/ \K)$ which does not extend to a field automorphism $\sigma \in \Gal(\E/ \K)$, giving a contradiction. Therefore we have $ \Gal(\g') \subseteq \psi(\Gal(\g))$. We then conclude that $\psi(\Gal(\g)) \subseteq \psi(J) \subseteq \psi(\Gal(\g))$, and therefore the sets are equal.
\subsection{Proof that $|\Gal(\g) \cap \ker(\psi)| =|J \cap \ker(\psi)|$}
We now prove that $|\Gal(\g) \cap \ker(\psi)| =|J \cap \ker(\psi)|$. In order to do this, we define the natural number $k$ as the number of distinct square distances which occur between the vertices $i$ and $j$ among all realisations of $\g'$. That is,
$$k := \left| \left\{ ||\rho(i) - \rho(j)||^2 : \rho \in \mathcal{F}' \right\}\right|.$$
Recall that $i,j$ are the base points of the Henneberg 1-step used to construct $\g$. Furthermore, we define $\lambda_{i,j}(\rho)$ to be the distance between the vertices $i$ and $j$ in a realisation $\rho$ of $\g'$. The integer $k$ is therefore the number of distinct values of $\lambda_{i,j}$ found among all realisations of $\g'$. We apply this definition equally to realisations of the larger graph $\g$ - that is, $\lambda_{i,j}$ applied to a realisation of $\g$ gives the squared distance between $i$ and $j$ in that realisation. We aim to show that both $|\Gal(\g) \cap \ker(\psi)|$ and $|J \cap \ker(\psi)|$ have cardinality equal to $2^{k}$.
\subsubsection{Proof that $|J \cap \ker(\psi)|=2^k$}
We first show that $|J \cap \ker(\psi)|=2^k$. In order to do this, we partition the realisations $\mathcal{F}$ of $\g$ in terms of the squared distance between $i$ and $j$, that is, two realisations $\rho$ and $\rho'$ are in the same part if $\lambda_{i,j}(\rho) = \lambda_{i,j}(\rho')$. We call this partition $\mathcal{P}_1$, and note that it has $k$ parts. We further refine this partition by splitting each part into two subsequent parts, in terms of the signed area of the triangle given by vertices $i,j,n$. That is, we define a partition $\mathcal P_2$ of $\mathcal F$ such that $\rho$ and $\rho'$ are in the same part iff $\lambda_{i,j}(\rho) = \lambda_{i,j}(\rho')$ and $a_{ijn}(\rho) = a_{ijn}(\rho')$. We note that each part $S \in \mathcal P_1$ contains precisely two parts, call them $S^+$ and $S^-$, of $\mathcal P_2$, depending on the sign of the signed area; the signed area is equally distributed in this way because each realisation can be paired with its reflection, which preserves $\lambda_{i,j}$ but reverses the signed area of the triangle. We analyse how $\psi$ interacts with these partitions of $\mathcal F$. 

We claim that an element $\alpha \in J \cap \ker(\psi)$ satisfies
$$\rho \in S \in \mathcal P_1 \implies \alpha(\rho) \in S,$$
that is, $\alpha$ preserves the parts of $\mathcal P_1$. Indeed, since $\alpha \in \ker(\psi)$, $\psi(\alpha)$ acts as the identity on $\mathcal F'$. In particular, we must have $\alpha(\rho) \in \{\rho,\rho'\}$ where $\rho'|_{n-1} = \rho|_{n-1}$. Since $\alpha(\rho)|_{n-1} = \rho|_{n-1}$, we have $\lambda_{i,j}(\rho) = \lambda_{i,j}(\alpha(\rho))$, as needed.

Secondly, we claim that for a part $S = S^+ \sqcup S^-$ of $\mathcal P_1$ and $\alpha \in J \cap \ker(\psi)$, either $\alpha(\rho) = \rho$ for all $\rho \in S$, or $\alpha(S^+)= S^-$ and $\alpha(S^-) = S^+$. In words, this means that $\alpha$ can do one of two things to $S$; it can fix everything, or swap every realisation $\rho$ with its equivalent realisation under $\sim$. We emphasize that each equivalence class $\{\rho,\rho'\}$ is contained in a single $S \in \mathcal P_1$, and precisely one element of the class is in $S^+$, the other being in $S^-$. This is because equivalent realisations have signed areas which are of opposite sign.

To prove this claim, we take WLOG an equivalence class $\{\rho, \rho'\}$, with $\rho \in S^+$. Assume that $\alpha(\rho) = \rho$. Note that therefore $\alpha(\rho') = \rho' \in S^-$. Since $\alpha \in J$, we have that 
$$a_{ijn}(\rho) = a_{ijn}(\rho') \implies a_{ijn}(\alpha(\rho)) = a_{ijn}(\alpha(\rho')),$$
and therefore for any other realisation $\rho_0 \in S^+$, we must have $ a_{ijn}(\rho_0)= a_{ijn}(\alpha(\rho_0))$, by applying the above implication to $\rho$ and $\rho_0$. Therefore $\alpha(\rho_0) = \rho_0$. Since we can give the same argument for $S^-$, we conclude that for all $\rho_0 \in S$, $\alpha(\rho_0) = \rho_0$. Using a similar argument for the case $\alpha(\rho) = \rho'$ yields the second case, where each realisation is swapped with the other element in its equivalence class, and so $\alpha(S^+)= S^-$ and $\alpha(S^-) = S^+$, as claimed.

Therefore, in order to define $\alpha$, for each part $S \in \mathcal P_1$ we need only decide whether $\alpha$ swaps every realisation in $S$ with its equivalent realisation under $\sim$, or if $\alpha$ pointwise fixes all of $S$. Since we get two choices for each part $S \in \mathcal P_1$, we have $|J \cap \ker(\psi)| \leq 2^k$. We now have to show that $|J \cap \ker(\psi)| \geq 2^k$. To do this, we define $2^k$ permutations $\beta:\mathcal F \rightarrow \mathcal F$ as follows. Let $I \subseteq \{1,2,...,k\}$ be an index set of the parts $S_1,S_2,...,S_k$ of $\mathcal{P}_1$. Note that there are $2^k$ choices for $I$. We then define the permutation
$$\beta_{I}(\rho) = \begin{cases}\rho \text{ if } \rho \in S_i, \ i \in I \\ \rho' \text{ if } \rho \in S_i, \ i \notin I.
\end{cases}$$
Where $\rho$ is in the equivalence class $\{\rho,\rho'\}$. Note that difference choices of $I$ give different permutations, and therefore the set of all $\beta_{I}$ is a set of $2^k$ distinct permutations. We prove that for all such $\beta_{I}$, we have $\beta_I \in J \cap \ker(\psi)$. 

To do this, we first note that $\beta_I \in H$, and further $\beta_I \in \ker(\psi)$. Indeed, by definition, $\beta_I(\rho)$ always remains within the class $\{\rho,\rho'\}$, and $\beta_I$ is therefore in $\ker(\psi)$. 

We now check whether $\beta_I \in J$. Since $\psi(\beta_I)$ is the identity permutation, we also have $\psi(\beta_I) \in \Gal(\g')$. The final property to check is preservation of equality of signed area. Suppose we take $\rho_1$ and $\rho_2$ with $a_{ijn}(\rho_1) = a_{ijn}(\rho_2)$. Since they have the same signed area, there exists some $i \in \{1,...,k\}$ such that $\rho_1,\rho_2 \in S_i$. Note that here we have used the algebraic independence of the squared distances, which implies that two realisations with the same signed area $a_{ijn}$ must have the same squared distance $\lambda_{1,2}$. Since these realisations lie in the same $S_i$, $\beta_I$ acts on them in the same way, that is, either $\beta_I(\rho_1) =\rho_1$ and $\beta_I(\rho_2) = \rho_2$, or $\beta_I(\rho_1) =\rho_1'$ and $\beta_I(\rho_2) = \rho_2'$. In either case the signed areas are the same after $\beta_I$; in the first case the signed areas do not change, and in the second they are negated. Therefore, $\beta_I \in J \cap \ker(\psi)$, and so we conclude that $|J \cap \ker(\psi)| = 2^k$.
\subsubsection{Proof that $|\Gal(\g) \cap \ker(\psi)| = 2^k$}\label{sec:distances}
We now prove that $|\Gal(\g) \cap \ker(\psi)| = 2^k$. Let $\sigma \in \Gal(\g)$ be such that $\psi(\sigma)$ is the identity permutation. We have already seen via the chain of inclusions in Section \ref{sec:chain} that $\psi(\Gal(\g)) = \Gal(\g')$, so that considered as a field automorphism, $\psi(\sigma)$ is the identity map from $\F$ to $\F$, where we recall that $\F$ is the field given by extending the field of all squared edge lengths $\K$ by the vertex coordinates of $\g'$. Therefore, $\sigma$ is a field automorphism $\E \rightarrow \E$ which fixes the field $\F$, that is, $\Gal(\g) \cap \ker(\psi) = \Gal(\E / \F)$ (note that $\E / \F$ is a Galois extension since $\E / \K$ is a Galois extension). Since $|\Gal(\E / \F)| = [\E : \F]$, it is enough to give the degree of this field extension. 

The field $\E$ is generated from $\F$ by the vertex coordinates of the last vertex $v_n$, through all realisations of $\g$. For a realisation $\rho \in \mathcal F = \{ \rho_1,...,\rho_{|\mathcal F|}\}$, let $v_n(\rho)$ be the vertex coordinates of $v_n$ in that realisation. We can write the tower of extensions
$$\F=: \F_0 \subseteq \F_1 \subseteq \F_2 \subseteq ... \subseteq \F_{|\mathcal F|} = \E,$$
where $\F_{t+1} := \F_{t}(v_n(\rho_{t+1}))$. We claim that for all $t=0,1,...,|\mathcal F|-1$, we have $[\F_{t+1} : \F_t] \leq 2$. To prove this, we first show that $\F_{t+1} \subseteq \F_t(a_{ijn}(\rho_{t+1}))$, and that $a_{ijn}(\rho_{t+1})$ gives an extension of $\F_t$ of degree at most $2$. To prove that $\F_{t+1} \subseteq \F_t(a_{ijn}(\rho_{t+1}))$, we show that both vertex coordinates of $v_n(\rho_{t+1})$ can be expressed as a polynomial in terms of $\Delta := a_{ijn}(\rho_{t+1})$. Using the notation $v_i = (x_i,y_i), v_j = (x_j,y_j), v_n = (x_n,y_n)$, we have the three equations
\begin{align*}
    \Delta = \frac{1}{2}(x_iy_j -& x_iy_n + x_jy_n - x_jy_i + x_ny_i - x_ny_j)\\
    \lambda_{i,n} &= (x_i-x_n)^2 + (y_i-y_n)^2\\
    \lambda_{j,n} &= (x_j-x_n)^2 + (y_j-y_n)^2.
\end{align*}
 The last two equations allow us to express $x_n$ linearly in $y_n$, and therefore from the first equation, both $x_n$ and $y_n$ can be expressed in the field $\F_t(\Delta)$. Secondly, from the Cayley-Menger determinant, $\Delta^2$ can be written as a polynomial in the squared distnces between the vertices $v_i,v_j,v_n$, which can all be calculated within the base field $\F_t$. Therefore, $\F_{t}(\Delta)$ is an extension of degree at most $2$.
 
 With this in hand, we define a subset $T \subseteq 
\{0,1,...,|\mathcal F|-1\}$ of indices where we have $[\F_{t+1}:\F_t] = 2$. By the tower law, we then have $[\E:\F] = 2^{|T|}$. We claim that for $t,t' \in I$, we must have $\lambda_{i,j}(\rho_{t+1}) \neq \lambda_{i,j}(\rho_{t'+1})$.

To prove this, we note that from the previous argument, we must have $\F_{t+1} = \F_t(\Delta_{t+1})$, where we define $\Delta_{t+1} := a_{ijn}(\rho_{t+1})$. Suppose that for some $t,t' \in T$, we have $\lambda_{i,j}(\rho_{t+1}) = \lambda_{i,j}(\rho_{t'+1})$. Note that this implies $\Delta_{t+1} = \pm \Delta_{t'+1}$, since all edge lengths of the triangle $v_iv_jv_n$ are the same. But then the two field extensions $\F_{t+1}/\F_t$ and $\F_{t'+1}/\F_{t'}$ cannot both be degree two, since they can both be generated by the same element, say $\Delta_{t+1}$, and there is an inclusion of base fields. We therefore have that $[\E : \F] \leq 2^k$. Note that there is a one-to-one correspondence between all distances $\lambda_{i,j}$ and all possible (non-signed) areas of the triangle $v_iv_jv_n$ among all realisations in $\mathcal{F}$. Let us call the set of all such areas $A = \{\alpha_1,\alpha_2,...,\alpha_k\}$. Since each extension above is generated by such an area, we have 
$$\E = \F(A).$$
In order to prove that $[\E :\F] = 2^k$, we give the following lemma. 
\begin{lemma}\label{lm: not-square1}
Let $\L$ be a field not of characteristic two, and let $a_1,\dots, a_k \in \L$. For each $s=1,...,k$, let $\alpha_s^2=a_s$, where $\alpha_s\in \overline{\L}$. If we have that for each subset $I \subseteq \left\{1,\dots, k \right\}$, the product
\[
\prod_{s\in I}a_s,
\]
is not a square in $\L$, then we have
\[
[ \L(\alpha_1,\dots,\alpha_k): \L]=2^k.
\]
\end{lemma}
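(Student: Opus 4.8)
The plan is to proceed by induction on $k$. The base case $k=1$ is immediate: the hypothesis with $I=\{1\}$ says $a_1$ is not a square in $\L$, so $X^2-a_1$ is irreducible over $\L$ (a degree-two polynomial is reducible iff it has a root), hence $[\L(\alpha_1):\L]=2$. For the inductive step, set $\L' := \L(\alpha_1,\dots,\alpha_{k-1})$; by the inductive hypothesis applied to $a_1,\dots,a_{k-1}$ (whose subset-product hypothesis is inherited from that for $a_1,\dots,a_k$), we have $[\L':\L]=2^{k-1}$. It then suffices to show $[\L'(\alpha_k):\L']=2$, i.e. that $a_k$ is not a square in $\L'$; the tower law then gives $[\L(\alpha_1,\dots,\alpha_k):\L]=2\cdot 2^{k-1}=2^k$.

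So the crux is: \emph{assuming each subset-product $\prod_{s\in I}a_s$ is a non-square in $\L$, show $a_k$ is not a square in $\L'=\L(\alpha_1,\dots,\alpha_{k-1})$}. I would argue by contradiction. The extension $\L'/\L$ is a compositum of the quadratic extensions $\L(\alpha_s)/\L$; by the inductive degree count it has degree $2^{k-1}$, and one checks it is Galois over $\L$ with group $(\Z/2\Z)^{k-1}$, the $s$-th generator $\tau_s$ sending $\alpha_s\mapsto -\alpha_s$ and fixing the other $\alpha_r$. (That these $2^{k-1}$ automorphisms are well-defined and distinct follows because the intermediate fields $\L(\prod_{s\in I}\alpha_s)$ are all distinct quadratic extensions, again by the non-square hypothesis on the products.) Now suppose $a_k = \beta^2$ for some $\beta\in\L'$. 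Then $\beta = \pm\alpha_k$, so $\alpha_k\in\L'$, and applying any $\tau_s$ gives $\tau_s(\alpha_k) = \pm\alpha_k$; thus $\alpha_k$ lies in an eigenspace for each $\tau_s$, which means the subgroup of $\Gal(\L'/\L)$ fixing $\alpha_k$ has index $1$ or $2$. If index $1$, then $\alpha_k\in\L$, contradicting that $a_k$ is a non-square in $\L$ (the $I=\{k\}$ hypothesis). If index $2$, then $\L(\alpha_k)$ is one of the quadratic subextensions of $\L'/\L$; by the structure of the Galois group every such subextension is $\L(\prod_{s\in I}\alpha_s)$ for some nonempty $I\subseteq\{1,\dots,k-1\}$, so $\alpha_k = c\prod_{s\in I}\alpha_s$ for some $c\in\L$. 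Squaring yields $a_k = c^2\prod_{s\in I}a_s$, whence $\prod_{s\in I\cup\{k\}}a_s = a_k\prod_{s\in I}a_s = c^2\big(\prod_{s\in I}a_s\big)^2 = \big(c\prod_{s\in I}a_s\big)^2$ is a square in $\L$ — contradicting the hypothesis for the index set $I\cup\{k\}$. This contradiction shows $a_k$ is a non-square in $\L'$, completing the induction.

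The main obstacle is the bookkeeping around the Galois-theoretic structure of $\L'/\L$: one must know precisely which quadratic subextensions occur (exactly the $\L(\prod_{s\in I}\alpha_s)$), and that they are pairwise distinct, in order to convert "$\L(\alpha_k)$ is a quadratic subextension" into the algebraic identity $\alpha_k = c\prod_{s\in I}\alpha_s$. An alternative that sidesteps explicit Galois theory is a direct "Kummer-style" linear-algebra argument: express a putative $\beta$ with $\beta^2=a_k$ in the $\L$-basis $\{\prod_{s\in I}\alpha_s : I\subseteq\{1,\dots,k-1\}\}$ of $\L'$, square it, and use the non-square hypotheses on all subset products to force all but one coordinate of $\beta$ to vanish and the surviving relation to contradict some $\prod_{s\in I\cup\{k\}}a_s$ being a non-square. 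Either route works; I would present the Galois-theoretic one as it is cleanest, citing the inductive degree count to license the claim that the relevant automorphisms exist.
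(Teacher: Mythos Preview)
Your proof is correct and structurally parallel to the paper's (induction on $k$, reducing to the claim that $\alpha_k\notin\L':=\L(\alpha_1,\dots,\alpha_{k-1})$ under the subset-product hypothesis), but the key step is handled differently. The paper proves an auxiliary lemma by direct coordinate computation: writing any $\gamma\in\L'$ with $\gamma^2\in\L$ as $c_1+c_2\alpha_{k-1}$ over $\L(\alpha_1,\dots,\alpha_{k-2})$, squaring, and using $\operatorname{char}\L\neq 2$ to force $c_1c_2=0$, then recursing. This yields $\gamma\prod_{s\in I}\alpha_s\in\L$ for some $I$, which applied to $\gamma=\alpha_k$ gives the desired contradiction. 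Your argument replaces this computation with the Galois correspondence: from $[\L':\L]=2^{k-1}$ you identify $\Gal(\L'/\L)\cong(\Z/2\Z)^{k-1}$, classify the quadratic subextensions as exactly the $\L\bigl(\prod_{s\in I}\alpha_s\bigr)$, and conclude $\alpha_k=c\prod_{s\in I}\alpha_s$. The Galois-theoretic route is more conceptual and makes the structure of the square classes transparent; the paper's route is more elementary and self-contained, avoiding any appeal to the Galois correspondence. Your ``Kummer-style'' alternative is in fact essentially the paper's auxiliary lemma. One small point to tighten: the passage from $\L(\alpha_k)=\L\bigl(\prod_{s\in I}\alpha_s\bigr)$ to $\alpha_k=c\prod_{s\in I}\alpha_s$ with $c\in\L$ deserves a line (write $\alpha_k=c_0+c_1\prod_{s\in I}\alpha_s$, square, and use $\operatorname{char}\L\neq 2$ to kill $c_0$).
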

To prove this lemma, we require the following auxiliary lemma.
\begin{lemma} \label{lem:aux}
  Let $\L$ be a field not of characteristic two, and let $a_1,a_2,...,a_k \in \L$, such that for each $s=1,...,k$ we have $a_s = \alpha_s^2$ for some $\alpha_s \in \overline{\L}$. Suppose that $[\L(\alpha_1,...,\alpha_k):\L] = 2^k$, and that $\gamma \in \L(\alpha_1,...,\alpha_k)\setminus \L$ is an element such that $\gamma^2 \in \L$. Then there exists a set $I \subseteq \{1,...,k\}$ such that 
  $$\gamma \prod_{s \in I}\alpha_s \in \L.$$
\end{lemma}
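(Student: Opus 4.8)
The plan is to use that $M := \L(\alpha_1,\dots,\alpha_k)$ is a multiquadratic Galois extension of $\L$ whose Galois group is elementary abelian of order $2^k$, and then to recover $\gamma$ from the sign character it induces on that group.

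First I would set up the Galois theory. Each $\L(\alpha_s)/\L$ is the splitting field of $x^2-a_s$, which is separable since $\Char\L\neq 2$, so the compositum $M$ is Galois over $\L$. Any $\sigma$ in $G:=\Gal(M/\L)$ is determined by the signs $\sigma(\alpha_s)/\alpha_s\in\{\pm 1\}$, giving an injection $G\hookrightarrow(\Z/2\Z)^k$; since $|G|=[M:\L]=2^k$ this is an isomorphism, so for each $s$ there is $\tau_s\in G$ with $\tau_s(\alpha_s)=-\alpha_s$ and $\tau_s(\alpha_u)=\alpha_u$ for $u\neq s$, and these $\tau_s$ generate $G$. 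Note the hypothesis also forces every $a_s$ to be a nonsquare in $\L$, in particular $\alpha_s\neq 0$; otherwise $M$ would be generated by fewer than $k$ square roots and have degree below $2^k$.

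Next I would attach characters. For $\gamma$ with $\gamma^2\in\L$ and $\gamma\neq 0$, every $\sigma\in G$ satisfies $\sigma(\gamma)^2=\gamma^2$, hence $\sigma(\gamma)=\chi(\sigma)\gamma$ with $\chi(\sigma)\in\{\pm 1\}$, and one checks directly that $\chi\colon G\to\{\pm 1\}$ is a homomorphism, nontrivial because $\gamma\notin\L$. On the other hand, for $I\subseteq\{1,\dots,k\}$ the element $\beta_I:=\prod_{s\in I}\alpha_s$ satisfies $\tau_s(\beta_I)=-\beta_I$ when $s\in I$ and $\tau_s(\beta_I)=\beta_I$ otherwise, defining a character $\chi_I$ of $G$. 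The map $I\mapsto\chi_I$ is injective: if $\chi_I=\chi_J$ then $\beta_{I\triangle J}$ is fixed by all of $G$, hence lies in $\L$, and if $I\triangle J\neq\emptyset$, picking $s_0\in I\triangle J$ and solving for $\alpha_{s_0}$ shows $M=\L(\alpha_u:u\neq s_0)$ has degree at most $2^{k-1}$, contradicting $[M:\L]=2^k$. Since the character group of $G\cong(\Z/2\Z)^k$ has exactly $2^k$ elements, the $\chi_I$ exhaust it.

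Finally, I would choose $I$ with $\chi=\chi_I$; then $\gamma/\beta_I$ is $G$-invariant, hence lies in $\L$ by the fundamental theorem of Galois theory, and multiplying by $\beta_I^2=\prod_{s\in I}a_s\in\L$ gives $\gamma\prod_{s\in I}\alpha_s=(\gamma/\beta_I)\prod_{s\in I}a_s\in\L$, as required. The one place where real content enters is the injectivity of $I\mapsto\chi_I$ — equivalently, that no nonempty product of the $\alpha_s$ lies in $\L$ — and this is precisely where the hypothesis $[M:\L]=2^k$ is consumed; everything else is standard multiquadratic Galois theory.
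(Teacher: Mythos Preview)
Your argument is correct, but it proceeds along a different axis than the paper's proof. The paper argues by a direct induction on $k$: writing $\gamma=c_1+c_2\alpha_{k}$ with $c_1,c_2\in\L(\alpha_1,\dots,\alpha_{k-1})$, the condition $\gamma^2\in\L$ forces $c_1c_2=0$ (here the hypothesis $\Char\L\neq 2$ enters), and one then applies the inductive hypothesis to whichever of $c_1,c_2$ survives. No Galois correspondence, no characters; just repeated use of the elementary fact that in a degree-two extension the cross term must vanish.

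Your route is more structural: you identify $\Gal(M/\L)\cong(\Z/2\Z)^k$, attach to $\gamma$ the sign character $\chi$, and observe that the $2^k$ products $\beta_I$ realise every character, so $\gamma/\beta_I\in\L$ for the matching $I$. This makes transparent \emph{why} the result holds --- the squares in $M^\times$ modulo $(\L^\times)^2$ are exactly indexed by the character group --- and would generalise cleanly to other abelian Kummer settings. The paper's induction, by contrast, is self-contained at the level of polynomial manipulation and avoids invoking the Galois correspondence, which is consistent with the authors' stated preference for keeping the argument elementary. One small remark: once you have the explicit generators $\tau_s$, the injectivity of $I\mapsto\chi_I$ is immediate from $\chi_I(\tau_s)=(-1)^{[s\in I]}$, so your $\beta_{I\triangle J}$ detour, while correct, is not strictly needed.
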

\begin{proof}
    We begin with the base case $k=1$, in which case we have a single element $a \in \L$ such that $a = \alpha^2$, and $[\L(\alpha):\L]=2$. By assumption, we are given an element $\gamma \in \L(\alpha) \setminus \L$ with $\gamma^2 \in \L$. We can then write $$\gamma = c_1 + c_2\alpha \implies \gamma^2 = c_1^2 + 2c_1c_2\alpha + c_2^2a_1$$
    for $c_1,c_2 \in \L$. Since $\gamma^2 \in \L$, we have $c_1c_2 = 0$. We cannot have $c_2=0$ since this implies $\gamma \in \L$. If $c_1=0$, then we have $\gamma = c_2\alpha$, and therefore the product $\gamma \alpha = c_2 a \in \L$, as needed.

    We now go to the inductive step. We assume the lemma is true for $k \leq n$. We have the elements $a_1,a_2,...,a_{n+1} \in \L$, and their roots $\alpha_s$ in the algebraic closure. We have that $[\L(\alpha_1,...,\alpha_{n+1}):\L]=2^{n+1}$, and we are given an element $\gamma \in \L(\alpha_1,...,\alpha_{n+1}) \setminus \L $ such that $\gamma^2 \in \L$. By considering the extension 
    $$\L(\alpha_1,...,\alpha_{n+1}) / \L(\alpha_1,...,\alpha_{n}),$$
    there exist $c_1,c_2 \in \L(\alpha_1,...,\alpha_{n})$ such that $\gamma = c_1 + c_2\alpha_{n+1}$. Since $\gamma^2 \in \L$, and $\gamma^2 = c_1^2 + 2c_1c_2 \alpha_{n+1} + c_2^2a_{n+1},$ we must have $c_1c_2=0$. First suppose $c_1=0$, in which case $\gamma = c_2\alpha_{n+1}$, and so $\gamma^2 = c_2^2 a_{n+1} \in \L$, we find that $c_2^2 \in \L$. If $c_2 \notin \L$, then we apply the induction hypothesis to $\L(\alpha_1,...,\alpha_{n})$ with the element $c_2$, implying the existence of a set $I' \subseteq \{1,...,n\}$ such that $c_2 \prod_{s \in I'} \alpha_s \in \L$. Define $I = I' \cup \{n+1\}$. We then have
    $$\gamma \prod_{s \in I}\alpha_s = c_2\alpha_{n+1} \prod_{s \in I} \alpha_s = c_2 a_{n+1}\prod_{s \in I'} \alpha_s \in \L,$$
    as needed. If, on the other hand, $c_2 \in \L$, then we have $\gamma =c_2\alpha_{n+1}$, and so the product $\gamma \alpha_{n+1} = c_2 a_{n+1} \in \L$, and so the set $I =\{n+1\}$ satisfies the lemma.

    In the second case we have $c_2=0$, and therefore $\gamma = c_1$. In this case we are again done by the induction hypothesis applied to the element $c_1 \in \L(\alpha_1,...,\alpha_n) \setminus \L$. Note that $c_1 \notin \L$, as we would then have $\gamma = c_1 \in \L$, which contradicts our assumptions.
\end{proof}

\begin{proof}[Proof of Lemma \ref{lm: not-square1}]
We prove the contrapositive, that is, if 
\[
[\L(\alpha_1,\dots,\alpha_k): \L]<2^k,
\]
 then there exists a subset $I \subseteq\left\{1,\dots, k \right\}$ such that $\Pi_{s\in I}a_s$ is a square in $\L$.
The proof is by induction. The base case $k=1$ is trivial and corresponds to the statement that $[\L(\alpha_1):\L] =1$, so that $\alpha_1\in \L$, and so $a_1 = \alpha_1^2$ is a square in $\L$. Suppose that the lemma holds for $k\leq n$. We show that it also holds for $k=n+1$. Consider the field extension 
$$\L(\alpha_1,\dots,\alpha_n,\alpha_{n+1}) / \L(\alpha_1,\dots,\alpha_n).$$
We consider the degree of this extension. We have $$[\L(\alpha_1,\dots,\alpha_n,\alpha_{n+1}): \L(\alpha_1,\dots,\alpha_n)][ \L(\alpha_1,\dots,\alpha_n):\L]<2^{n+1}.$$ If we were to have $[ \L(\alpha_1,\dots,\alpha_n):\L] < 2^{n}$, then the proof would be complete by the induction hypothesis, as this implies the existence of a subset $I \subseteq \{1,2,...,n\} \subseteq \{1,2,...,n+1\}$ with the property that $\Pi_{s \in I} a_s$ is a square in $\L$. Therefore, we can assume $[ \L(\alpha_1,\dots,\alpha_n):\L] = 2^{n}$, so that we must have
$$[\L(\alpha_1,\dots,\alpha_n,\alpha_{n+1}): \L(\alpha_1,\dots,\alpha_n)] =1.$$
If this is the case, then clearly we have $\alpha_{n+1} \in \L(\alpha_1,\dots,\alpha_n).$ If we have $\alpha_{n+1} \in \L$, then we would have that $a_{n+1} = \alpha_{n+1}^2$ is a square in $\L$ and we are done by taking $I=\{n+1\}$. Therefore, we may apply Lemma~\ref{lem:aux} to this field, with the choice $\gamma= \alpha_{n+1}$. This implies the existence of a set $I' \subseteq \{1,...,n\}$ such that we have 
$$\alpha_{n+1}\prod_{s \in I'} \alpha_s \in \L.$$
Defining the set $I := I' \cup \{n+1\}$, we now have that 
$$\prod_{s \in I} a_s = \left( \alpha_{n+1} \prod_{s \in I'} \alpha_s\right)^2$$
showing that this product is a square in $\L$, as needed.
\end{proof}
We now wish to apply Lemma \ref{lm: not-square1} to the field $\F$, with the elements $a_s = \alpha_s^2$ being squared areas from the set $A$. In order to show that $[\mathbb E: \mathbb F] = 2^k$, we need to show that for each subset $I \subseteq \{1,...,k\}$, the product 
$$\prod_{s \in I} a_s$$
is not a square in $\mathbb F$. Using the Cayley-Menger formula, each $a_s$ can be written as
\begin{equation}\label{eq:CM}
-16a_s = \lambda_{i,j,s}^2 - 2\lambda_{i,j,s}(\lambda_{i,n} + \lambda_{j,n}) + (\lambda_{j,n} - \lambda_{i,n})^2,
\end{equation}
where $\lambda_{i,j,s}$ is the squared distance between $v_i$ and $v_j$ in a realisation with the square area of $v_iv_jv_n$ equal to $a_s$. Note that $\lambda_{i,n}$ and $\lambda_{j,n}$ were fixed by the choice of $\Lambda$ at the beginning of the proof. We consider the right hand side of \eqref{eq:CM} as a polynomial in two variables $X,Y$, corresponding to $\lambda_{i,n}$ and $\lambda_{j,n}$. We define the field $\mathbb K' := \mathbb Q ( \lambda(e) : e \in E(\g'))$. Note that since $\Lambda$ is algebraically independent over $\mathbb Q$, the squared edge lengths $\lambda_{i,n}$ and $\lambda_{j,n}$ are algebraically independent over $\mathbb K'$. We further define a field $\mathbb E':= \mathbb K'(\{ \text{ vertex coordinates of }\g'\})$. As $\mathbb E'$ is an algebraic extension of $\K'$, $\lambda_{i,n}$ and $\lambda_{j,n}$ are algebraically independent over $\mathbb E'$. We also note that $\mathbb F = \mathbb E'(\lambda_{i,n},\lambda_{j,n})$, so that in fact
$$\mathbb F \cong \text{Frac}(\mathbb E'[X,Y]).$$
We denote by $\phi$ the isomorphism between $\mathbb F$ and the fraction field of the polynomials $\mathbb E'[X,Y]$. Recall that we aim to show that the product $\prod_{s \in I} a_s$
is not a square in $\mathbb F$. Suppose that it were a square; that is, there exists some $r \in \mathbb F$ such that $$\prod_{s \in I} a_s = r^2.$$
By \eqref{eq:CM}, this implies
$$\prod_{s \in I} \left(\lambda_{i,j,s}^2 - 2\lambda_{i,j,s}(\lambda_{i,n} + \lambda_{j,n}) + (\lambda_{j,n} - \lambda_{i,n})^2\right)= (-16)^{|I|}r^2.$$
Applying $\phi$ to each side and a slight rearrangement then gives
\begin{equation}
    \label{eq:polyprod} \prod_{s \in I} \left(-\lambda_{i,j,s}^2 + 2\lambda_{i,j,s}(X + Y) - (Y - X)^2\right)= (4^{|I|}\phi(r))^2.
\end{equation}
Note that the left hand side of \eqref{eq:polyprod} is a product of distinct polynomials. Since $\phi(r)^2$ is itself a polynomial, we must have that $\phi(r)$ is a polynomial. Therefore, $\prod_{s \in I} \left(-\lambda_{i,j,s}^2 + 2\lambda_{i,j,s}(X + Y) - (Y - X)^2\right)$ must be a perfect square in the polynomial ring $\mathbb E'[X,Y]$. For this to occur, at least one of the factors in \eqref{eq:polyprod} must be reducible. We claim that this does not happen; in fact, we prove that the polynomial $\lambda_{i,j,s}^2 - 2\lambda_{i,j,s}(X + Y) + (Y - X)^2$ is irreducible in $\mathbb E'[X,Y]$ for all $s$. We perform the simple change of variables $X+Y \rightarrow Z$ and $X-Y \rightarrow W$, giving $\lambda_{i,j,s}^2 - 2\lambda_{i,j,s}Z + W^2$. If this polynomial factorises, it must factor into linear pieces. If we write
$$\lambda_{i,j,s}^2 - 2\lambda_{i,j,s}Z + W^2 = (W + c_1Z + c_2)(W + c_3Z + c_4),$$
we see that we must have $c_1c_3=0$, so without loss of generality assume that $c_1 = 0$. As no term $WZ$ appears, we must have $c_3 =0$. But now no term of $Z$ is present, giving a contradiction. Therefore, the product $\prod_{s \in I} a_s$ is not a square in $\mathbb F$, as needed. Applying Lemma \ref{lm: not-square1} then gives 
$$[\E:\F] = 2^k.$$
\end{proof}
\section{A simple example applying Theorem \ref{thm: main}}\label{ssec:examples}
In this section, we will make use of Theorem \ref{thm: main} to determine the Galois group of the following minimally rigid graph $\g$.
\begin{figure}[H]
    \centering
    \begin{tikzpicture}[scale=1]
\node[vertex, label={1}] (1) at (5,0) {};
\node[vertex, label={3}] (2) at (5.6,1) {};
\node[vertex, label={2}] (3) at (6.5,0) {};
\node[vertex, label={4}] (4) at (5.6,-1) {};
\node[vertex, label={5}] (5) at (8,0.4) {};

\draw[edge] (1)edge(2) (1)edge(3) (2)edge(3) (4)edge(1) (4)edge(3) (5)edge(2) (5)edge(4);
\end{tikzpicture}
\caption{The graph $\g$}
\end{figure}

It is not hard to see that the Galois group of the minimally rigid subgraph $\g' \subset \g$ corresponding to vertices $1,2,3,4$ is isomorphic to $\Z_2^2$. We will show that Theorem~\ref{thm: main} implies that
\[
\Gal(\g) = D_4 \times \Z_2.
\] 
The graph $\g$ has eight realisations, which we label as $\mathcal{F}=\left\{\rho_1, \dots, \rho_8\right\}$. Note that for any realisation of $\g$, we can find another realisation $\rho'$ which is given by reflecting only vertex $5$ in the line through $3$ and $4$, and that this pair of realisations form an equivalence class under the relation $\sim$. Without loss of generality, we assume that the equivalence classes are $\{\rho_{i}, \rho_{i+1}\}$ for $i=1,3,5,7$, where $\rho_i$ for $i$ odd is chosen to be the realisation such that the signed area $a_{345}(\rho_i)$ is positive. Theorem \ref{thm: main} then tells us the following.
\begin{align*}
    \Gal(\g) \cong \{ h \in H : \psi(h) \in \mathbb \Gal(\g'),\ \forall \rho, \rho' \in \mathcal F, \ &a_{345}(\rho) = a_{345}(\rho')  \\ & \implies a_{345}(h\rho) = a_{345}(h\rho')\},
\end{align*}
where we recall that 
$$H:= \{ h \in S_8 : \forall \rho, \rho' \in \mathcal F,\ \rho \sim \rho' \implies h(\rho) \sim h(\rho') \},$$
and that $\psi(h)$ is the induced permutation in $S_4$ given by $h$ acting on the equivalence classes $\{\rho_i,\rho_{i+1}\}$ for $i=1,3,5,7$. Each equivalence class corresponds to a certain realisation of the subgraph $\g'$, and we choose our labelling in the following way:
\begin{itemize}
    \item $\rho_1$ and $\rho_2$ correspond to the realisation of $\g'$ where both triangles $123$ and $124$ have positive orientation. Call this class $A$.
    \item $\rho_3$ and $\rho_4$ correspond to the realisation of $\g'$ where the triangle $123$ has positive orientation, and $124$ has negative orientation. Call this class $B$.
    \item $\rho_5$ and $\rho_6$ correspond to the realisation of $\g'$ where the triangle $123$ has negative orientation, and $124$ has positive orientation. Call this class $C$.
    \item $\rho_7$ and $\rho_8$ correspond to the realisation of $\g'$ where both triangles $123$ and $124$ have negative orientation. Call this class $D$.
\end{itemize}
It is now simple to write down which elements of $S_4$ are in the Galois group $\Gal(\g')$; we get the four permutations (in cycle notation)
$$\Gal(\g') = \{(A)(B)(C)(D), (AB)(CD), (AC)(BD),(AD)(BC)\}.$$

In addition to the condition $\psi(h) \in \Gal(\g')$, we also need $h$ to preserve the equivalence classes given by the signed area $a_{345}$. These equivalence classes are the pairs
$$\{\rho_1, \rho_7\}, \{\rho_2, \rho_8\}, \{ \rho_3, \rho_5\}, \{\rho_4, \rho_6\}.$$
Using Theorem \ref{thm: main}, one can verify that the following elements of $S_8$ belong to $\Gal(\g)$.
\[
h_1=(1423)(5768),\quad h_2=(34)(56),\quad h_3=(18)(27)(36)(45).
\]
Moreover, they fulfil the following equalities
\[
h_1^4=h_2^2=h_3^2=e, \quad h_2h_1h_2=h_1^{-1},\quad h_3h_1=h_1h_3,\quad h_2h_3=h_3h_2.
\]
This shows that the direct product $D_4 \times \Z_2$, which is isomorphic to the group $\left<h_1, h_2,h_3\right>$, is a subgroup of $\Gal(\g)$. On the other hand, we have $|\Gal(\g)|=16$. Indeed, consider the following tower of fields 
\[
\K \subseteq \F \subseteq \E,
\]
where $\K$ is $\mathbb Q$ extended by the squared edge lengths of $\g$, $\F$ is $\K$ extended by all vertex coordinates of realisations of the subgraph $\g'$, and $\E$ extends $\F$ by vertex coordinates of all realisations of $\g$ (in particular, by the coordinates of the final vertex $5$). We therefore have $\Gal(\g')=\Gal(\g)/ \Gal(\E/\F)$. As we have seen in Section \ref{sec:distances}, the degree of $\mathbb E$ over $\mathbb F$ is $2^k$, where $k$ is the number of distinct distances between two non-edge vertices of $\g$; in our example, $k=2$. Thus, $|\Gal(\g)|=|\Gal(\g')||\Gal(\E/\F)| = 16$, and hence
\[
\Gal(\g)\cong D_4 \times \Z_2.
\]

\begin{remark} \rm
The Galois group gives restrictions on the possible number of real realizations for real labellings. In this case, the field $\K$ is a 
subfield of $\R$. Since $\E$ is a subfield of $\C$ closed under all automorphisms that fix $\K$, the restriction of complex conjugation
to $\E$ is a field automorphism of $\E$ fixing $\K$, that is, an element $\epsilon$ of the Galois group. Note that $\epsilon^2$ is the
identity. A compatible realization is real if and only if it is fixed by $\epsilon$. If $k$ is the number of real realizations, then 
there must exist an element in the Galois group of order two or one that fixes exactly $k$ elements.

In the example above, any involution in $\Gal(\g)$ has either no fixed points at all or exactly four fixed points. Hence the number 
of real solutions for a generic real labelling of $\g$ is either zero, four, or eight.
\end{remark}

\section*{Acknowledgments}
The first listed author was supported by the Austrian Science Fund FWF Project P33003. We thank Niels Lubbes and Matteo Gallet for helpful conversations.

\bibliography{galoisgroupminimallyrigid}{}

\bibliographystyle{amsplain}

\Addresses
	
\end{document}